\documentclass[11pt]{amsart}
\title{HAUSDORFFNESS OF GENERAL COMPACTIFICATIONS}
\author{{Ramkumar, S    and    Ganesa Moorthy, C }}

\address{Department of Mathematics\\ Alagappa University\\ Karaikudi - 630 003\\ India\\}
\email{ramkumarsolai@gmail.com and ganesamoorthyc@gmail.com}
\keywords{Semi-lattices of compactifications, Quotient topology, Hausdorff Partitions }
\subjclass[2000]{54D35}
\date{}

\begin{document}

\newtheorem{lem}{\sf Lemma}[section]
\newtheorem{defn}[lem]{\sf Definition}
\newtheorem{thm}[lem]{\sf Theorem}
\newtheorem{cor}[lem]{\sf Corollory}
\newtheorem{pro}[lem]{\sf Property}
\newtheorem{rem}[lem]{\sf Remark}

\begin{abstract}
Magill proved that the remainders of two locally compact Hausdorff spaces in their Stone-$\check{C}$ech compactifications are homeomorphic if and only if the lattices of their Hausdorff compactifications are lattice isomorphic. His construction for compactifications are explicitely discussed through the partitions of their Stone-$\check{C}$ech compactifications. Partitions in a Stone-$\check{C}$ech compactification which lead to Hausdorff compactifications are characterized in this article. Embeddings of certain upper semi-lattices of compactifications into lattices of compactifications are constructed.
\end{abstract}
\maketitle
{\section{Introduction}}

Let $X$ be a completely regular Hausdorff  space and $\alpha_1X$ and $\alpha_2X$ be two Hausdorff compactifications of $X$. These two compactifications may be compared by an order relation: $\alpha_1 X\geq\alpha_2X$ if and only if there is a continuous function $h_1:\alpha_1X\rightarrow \alpha_2X$ such that $h_1(x)=x$ for all $x\in X$. The collection $K(X)$ of all Hausdorff compactifications of a Tychonoff space $X$ forms a complete upper semi-lattice under the natural order defined above. It is known that for a Tychonoff space $X$, $K(X)$ is a lattice if and only if $X$ is locally compact (see: \cite[Theorem 4.3 (e)]{PGR}). Magill\cite{Mag} proved that the remainders $\beta X\setminus X$ and  $\beta Y\setminus Y$ of $X$ and $Y$ are homeomorphic if and only if $K(X)$ and $K(Y)$ are lattice isomorphic, where $X$ and $Y$ are locally compact spaces. Rayburn \cite{Ray} considered non locally compact points and obtained some extensions of Magill's results. These two articles are fundamental articles for studies on lattice structure on compactifications and topological structure of remainders. Magill furnished indirectly a construction for all Hausdorff compactifications of a given Tychonoff space. This construction is based on partitions in their Stone-$\check{C}$ech compactifications and this is explained in the first section. Every partition of a Stone-$\check{C}$ech compactification by compact subsets always leads to a compactification, which is the  corresponding quotient space. A characterization for partitions which lead to Hausdorff compactifications is discussed in the second section. If $Y$ is the collection of all locally compact points of a given Tychonoff space $X$ and if $Y$ is dense in $X$, then the upper semi-lattice $K(X)$ can be embedded into the lattice $K(Y)$. This is explained in the third section.\\
\section{Magill's Construction}

Let $K(X)$ be the collection of all Hausdorff compactifications of a Tychonoff space $X$ and $\beta X$ be its Stone-$\check{C}$ech compactification. For every $\alpha X\in K(X)$, there is a continuous map, called $\check{C}$ech map, $f_\alpha :\beta X \rightarrow \alpha X$ such that $f_\alpha (x)=x$ for all $x \in X$. Also $\{f_\alpha^{-1} (y) : y\in \alpha X\}$ forms a partiton in $\beta X$ , where each $f_\alpha^{-1} (y)$ is a compact subset of $\beta X$, when $y\in \alpha X$. Moreover $\{x\}$ is in this partition, for every $x\in X$. This is justified by the following lemma.\\
\begin{lem}
 
Let $\alpha_1 X$, $\alpha_2 X$ be two Hausdorff compactifications of $X$ such that $\alpha_1 X\geq \alpha_2 X$. Let $f:\alpha_1 X \rightarrow \alpha_2 X$ be the natural continuous onto mapping such that $f(x)=x$, for every $x\in X$. Then$f^{-1}(x)=\{x\}$, for every $x\in X$.
\end{lem}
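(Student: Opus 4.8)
The plan is to argue by a density-and-nets argument, exploiting that $X$ sits inside both $\alpha_1 X$ and $\alpha_2 X$ with the same intrinsic topology. Since $x\in X$ satisfies $f(x)=x$, the inclusion $\{x\}\subseteq f^{-1}(x)$ is immediate, so the whole content of the statement is the reverse inclusion: no point $p$ of $\alpha_1 X$ other than $x$ can satisfy $f(p)=x$.

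First I would fix $p\in\alpha_1 X$ with $f(p)=x$ and use the density of $X$ in the compactification $\alpha_1 X$ to choose a net $(x_\lambda)$ in $X$ with $x_\lambda\to p$ in $\alpha_1 X$. Applying continuity of $f$ together with the hypothesis $f|_X=\mathrm{id}_X$ then gives $x_\lambda=f(x_\lambda)\to f(p)=x$ in $\alpha_2 X$.

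The key step is to transfer this convergence back to $\alpha_1 X$. Because $\alpha_2 X$ is a compactification, the subspace topology it induces on $X$ is exactly the original topology of $X$; since the limit point $x$ lies in $X$ and the net $(x_\lambda)$ lives entirely in $X$, it follows that $x_\lambda\to x$ in $X$ itself. The same identification of the subspace topology for the compactification $\alpha_1 X$ lets me push this convergence up again, so that $x_\lambda\to x$ in $\alpha_1 X$ as well.

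At this point $(x_\lambda)$ has two limits in $\alpha_1 X$, namely $p$ and $x$. Invoking the Hausdorffness of $\alpha_1 X$, which forces uniqueness of net limits, yields $p=x$, completing the reverse inclusion and hence the lemma. The one step needing care, and which I regard as the crux, is the observation that convergence of a net inside $X$ to a point of $X$ is an intrinsic property of $X$ and is therefore seen identically in the two compactifications; everything else is routine. A filter-based or neighbourhood-based reformulation would work equally well, but the net version keeps the appeal to Hausdorffness through uniqueness of limits transparent.
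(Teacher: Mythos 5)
Your proposal is correct and rests on exactly the same idea as the paper's proof: the only nontrivial input is that $X$ carries one and the same subspace topology inside $\alpha_1 X$ and $\alpha_2 X$, which together with density of $X$ and Hausdorffness of $\alpha_1 X$ forces $f^{-1}(x)=\{x\}$. The paper packages this as a proof by contradiction with explicit neighbourhoods ($U_x$, $V_x$, $W_y$) while you package it as a direct argument via nets and uniqueness of limits, but these are routine reformulations of one another, as you yourself note.
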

\begin{proof}
On the contrary assume that, there is an element $y\in f^{-1}(x)$ such that  $y\neq x$, for some $x\in X$.
Then there are two disjoint open neighbourhoods $U_x,\ U_y$ of $x,\ y$ in $\alpha_1 X$, respectively. For $U_x\mathop{\cap}X$, find an open neighbourhood $V_x$ of $x$ in $\alpha_2 X$ such that $V_x\mathop{\cap}X=U_x\mathop{\cap}X$. Since $f(y)=x\in V_x$, find an open neighbourhood $W_y$ of $y$ in $\alpha_1 X$ such that $y\in W_y\subseteq U_y$ and $f(W_y)\subseteq V_x$.
Then $W_y\mathop{\cap}X=f(W_y\mathop{\cap}X)\subseteq V_x\mathop{\cap}X=U_x\mathop{\cap}X$. Thus $U_x\mathop{\cap}W_y\mathop{\cap}X\neq \phi$. This contradicts the fact that $U_x\mathop{\cap}W_y\mathop{\cap}X=\phi$. This proves the lemma.\end{proof}

On the other hand, consider a partition $\pi$ of $\beta X$ such that

\begin{itemize}
\item [(i)] Every member of $\pi$ is a compact subset of $\beta X$.
\item [(ii)]  $\{x\}\in \pi$, for every $x\in X$.
\end{itemize}

Now consider the quotient space $\beta X/\pi$ with the quotient topology induced by a quotient map $f: \beta X \rightarrow \beta X/ \pi$. Since the quotient map is continuous and $\beta X$ is compact, $f$ is surjective and $\beta X/ \pi$ is a compact space. Also $\beta X/ \pi$ is a compactification of $X$, because $X$ is dense in $\beta X/ \pi$. This is a construction of Magill \cite{Mag} for compactifications. But this compactification may not be Hausdorff unless $\pi$ is a Hausdorff partition of $\beta X$. That is, $\beta X/ \pi$ is made into a Hausdorff space under the quotient topology.
\begin{lem}

Let $X$ be a Hausdorff space and $\{K_i\}_{i\in I}$ be a collection of mutually disjoint non empty compact subsets of $X$ and it is locally finite in $X$. Then, for any fixed $K_m$, there is an open set $U$ such that $U$ contains $K_m$ and $U$ does not intersect any of the $K_j ,\ j\neq m$.
\end{lem}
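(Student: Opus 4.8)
The plan is to exploit local finiteness to turn this infinite separation problem into a finite one, and then to invoke the standard fact that in a Hausdorff space any two disjoint compact sets can be enclosed in disjoint open sets.

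First I would use local finiteness pointwise along $K_m$: for each $p\in K_m$ choose an open neighbourhood $V_p$ meeting only finitely many members of the family $\{K_i\}_{i\in I}$. Since $K_m$ is compact, finitely many of these, say $V_{p_1},\dots,V_{p_n}$, cover $K_m$; their union $W=V_{p_1}\cup\cdots\cup V_{p_n}$ is then an open set with $K_m\subseteq W$ that meets only finitely many of the $K_i$ (a finite union of sets each meeting finitely many). Discarding $K_m$ itself, let $K_{j_1},\dots,K_{j_k}$ be precisely those members of the family with index $\neq m$ that actually intersect $W$. By construction every remaining $K_j$ with $j\neq m$ is already disjoint from $W$, so those indices require no further work.

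Next, for each $\ell\in\{1,\dots,k\}$ the sets $K_m$ and $K_{j_\ell}$ are disjoint compact subsets of the Hausdorff space $X$, so there exist disjoint open sets $A_\ell\supseteq K_m$ and $B_\ell\supseteq K_{j_\ell}$. I then set $U=W\cap A_1\cap\cdots\cap A_k$. This is a finite intersection of open sets, each of which contains $K_m$, so $U$ is open and $K_m\subseteq U$. To verify that $U$ avoids every $K_j$ with $j\neq m$, I split into two cases: if $K_j$ does not meet $W$, then it cannot meet $U\subseteq W$; if instead $K_j=K_{j_\ell}$ for some $\ell$, then $U\subseteq A_\ell$ while $K_{j_\ell}\subseteq B_\ell$ and $A_\ell\cap B_\ell=\phi$, forcing $U\cap K_{j_\ell}=\phi$. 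Hence $U$ is the desired neighbourhood of $K_m$.

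The only conceptual ingredient is the separation of two disjoint compact sets by disjoint open sets, which is a routine consequence of the Hausdorff axiom (separate a point of one set from the other compact set, then apply compactness twice). I therefore expect no serious obstacle here; the point requiring care is purely bookkeeping, namely ensuring that the finitely many singled-out indices $j_1,\dots,j_k$ exhaust all members of the family that meet $W$, so that every $K_j$ with $j\neq m$ falls into exactly one of the two cases above and none slips through.
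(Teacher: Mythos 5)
Your proof is correct, but it diverges from the paper's argument at the key step. Both proofs begin the same way, using local finiteness to give each point of $K_m$ a neighbourhood meeting only finitely many members of the family; after that the paper takes a shorter route: from each such pointwise neighbourhood it simply deletes the finitely many offending sets $K_{i_1},\dots,K_{i_n}$ (each compact in a Hausdorff space, hence closed, so the difference is still open and still contains the point, the $K_i$ being mutually disjoint), and then takes the union of these modified neighbourhoods over all of $K_m$. That argument never needs the compactness of $K_m$ itself, nor a finite subcover, nor the fact that two disjoint compact sets in a Hausdorff space can be separated by disjoint open sets; it only uses that the \emph{other} members of the family are closed. Your version instead extracts a finite subcover $W$ of $K_m$ and then invokes compact--compact separation against each of the finitely many $K_{j_\ell}$ meeting $W$, which is a heavier tool but yields a perfectly sound proof; your bookkeeping (every $K_j$ with $j\neq m$ either misses $W$ or is one of the $K_{j_\ell}$) is airtight, including the degenerate case $k=0$. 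The trade-off is that the paper's subtraction trick is more elementary and slightly more general, while yours leans on standard separation machinery that a reader is likely to accept without proof.
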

\begin{proof} 
Let $x\in K_m$. Since $\{K_i\}_{i\in I}$ is locally finite, there is an open set $U$ of $x$  which intersects only finite number of $K_i$'s. Suppose $U$ intersects only $K_{i_1}, K_{i_2}\cdots K_{i_n}$ other than $K_m$. Then
$U_x=U\setminus\mathop{\cup}\limits_{k=i_1}^{i_n}K_k$ does not intersect none of the $K_i$'s other than $K_m$. Then $\{U_x: x\in K_m\}$ is an open cover for $K_m$ and their union is an open set which contains $K_m$ and does not intersect any of the $K_i,\ i\neq m$.\end{proof}
\begin{thm}
Let $X$ be a Tychonoff space and $\alpha X$ be any Hausdorff compactification of $X$. Let $\{K_i\}_{i\in I}$  be a collection of mutually disjoint non empty compact subsets of $\alpha X\setminus X$ such that it is locally finite in $\alpha X$. Then there is a Hausdorff compactification  $\gamma X=(\alpha X\setminus\mathop{\cup} \limits_{i\in I}K_i)\cup \{p_i:i\in I\}$ of $X$, where $p_i$ are distinct and $p_i\notin \alpha X$, and there is a continuous mapping $h:\alpha X\rightarrow \gamma X$ such that 
$h(x)=x$, for $x\notin \mathop{\cup}\limits_{i\in I}K_i$ and $h(x)=p_i$, for $x\in K_i$.
\end{thm}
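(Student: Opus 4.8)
The plan is to realize $\gamma X$ as a quotient of $\alpha X$. I would define the partition $\pi$ of $\alpha X$ whose non-singleton blocks are exactly the sets $K_i$, and whose remaining blocks are the singletons $\{z\}$ for $z\in\alpha X\setminus\bigcup_{i\in I}K_i$; since each $K_i\subseteq\alpha X\setminus X$, every point of $X$ is a singleton block, so $X$ sits inside the block space. Taking $h\colon\alpha X\to\alpha X/\pi$ to be the quotient map and identifying the block $K_i$ with the new point $p_i$, the underlying set of $\alpha X/\pi$ is precisely $(\alpha X\setminus\bigcup_{i\in I}K_i)\cup\{p_i:i\in I\}$, and $h$ has the stated form. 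Compactness of $\gamma X$ is then immediate, as it is the continuous image of the compact space $\alpha X$. The entire content of the theorem therefore reduces to showing that $\alpha X/\pi$ is Hausdorff, which I expect to be the main obstacle.

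Two preliminary observations streamline the separation argument. First, because $\{K_i\}_{i\in I}$ is locally finite and each $K_i$ is compact (hence closed) in the Hausdorff space $\alpha X$, the union $\bigcup_{i\in I}K_i$ is closed; consequently $\alpha X\setminus\bigcup_{i\in I}K_i$ is open, and any open set disjoint from every $K_i$ is \emph{saturated} with respect to $\pi$ (it meets no block partially). Second, by the preceding lemma on locally finite families, for each fixed $i$ there is an open set $G_i$ with $K_i\subseteq G_i$ and $G_i\cap K_m=\emptyset$ for all $m\neq i$; such a $G_i$ is automatically saturated, since it contains $K_i$ wholly and meets no other $K_m$. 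The point of both observations is that to separate two blocks in the quotient it suffices to exhibit disjoint saturated open sets around them in $\alpha X$, and these two devices manufacture saturation cheaply.

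For Hausdorffness I would take two distinct blocks $A,B$ (each a singleton or some $K_i$), note that both are disjoint compact subsets of the compact Hausdorff, hence normal, space $\alpha X$, and choose disjoint open $O_A\supseteq A$, $O_B\supseteq B$. I then saturate each while preserving disjointness by intersecting with the sets above: if the block is a singleton $\{z\}$ I intersect its neighborhood with $\alpha X\setminus\bigcup_{i\in I}K_i$, and if it equals $K_i$ I intersect with $G_i$. In every case $U:=O_A\cap(\text{saturator of }A)$ and $V:=O_B\cap(\text{saturator of }B)$ are saturated open sets with $A\subseteq U$, $B\subseteq V$ and $U\cap V\subseteq O_A\cap O_B=\emptyset$. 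Saturation gives $h^{-1}(h(U))=U$ and $h^{-1}(h(V))=V$, so $h(U)$ and $h(V)$ are disjoint open neighborhoods of the corresponding points in $\gamma X$, proving the quotient Hausdorff. The only delicate point throughout is keeping the separating sets saturated, and this is exactly what the local finiteness lemma and the closedness of $\bigcup_{i\in I}K_i$ secure.

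Finally I would record that $h$ restricted to the saturated open set $\alpha X\setminus\bigcup_{i\in I}K_i$ is a continuous injection that is a homeomorphism onto its (open) image, so in particular $h|_X$ embeds $X$; and since $h(\overline{X})=h(\alpha X)=\gamma X$ lies in $\overline{h(X)}$, the copy $h(X)=X$ is dense. Hence $\gamma X$ is a Hausdorff compactification of $X$ together with the required map $h$.
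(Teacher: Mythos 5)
Your proposal is correct and follows essentially the same route as the paper: realize $\gamma X$ as the quotient of $\alpha X$ by the partition whose nontrivial blocks are the $K_i$, get compactness and density from continuity of the quotient map, and obtain Hausdorffness by first separating two blocks with disjoint open sets via normality of $\alpha X$ and then shrinking to saturated open sets using local finiteness (Lemma~2.2). The only difference is presentational: you fold the paper's three cases into a single argument by introducing the ``saturator'' of a block, which is a clean streamlining rather than a different method.
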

\begin{proof}
Let $\gamma X=(\alpha X\setminus\mathop{\cup} \limits_{i\in I}K_i)\cup \{p_i:i\in I\}$ and $Y=(\alpha X\setminus\mathop{\cup} \limits_{i\in I}K_i)$ where $p_i$ are distinct, and $p_i\notin \alpha X$. Define a map $h:\alpha X\rightarrow \gamma X$ by $h(x)=x$ if $x\in Y$ and $h(x)=p_i$ if $x\in K_i$. Let $\gamma X$ have the quotient topology under the quotient map $h$. Since $\alpha X$ is compact, $\gamma X$ is compact. Let $U$ be an open set in $\gamma X$. Then $h^{-1}(U)$ is an open set in $\alpha X$ which intersects $X$ so that $h(h^{-1}(U))=U$ intersects $h(X)=X$. Hence $X$ is dense in $\gamma X$. To prove the Hausdorffness, we have to consider the following three cases for any $x,\  y\in \gamma X$ such that $x\neq y$.
\begin{itemize}
\item [(i)] $x\in \gamma X\setminus Y$ and $y\in \gamma X\setminus Y$.
\item [(ii)] $x\in Y$ and $y\in \gamma X\setminus Y$.
\item [(iii)] $x\in Y$ and $y\in Y$.

\end{itemize}
{\bf Case (i):}

Let $x\in \gamma X\setminus Y$ and $y\in \gamma X\setminus Y$. Then $x=p_i$ and $y=p_j$, $i\neq j$. Since $\alpha X$ is normal, we can find open sets $V$ and $W$ in $\alpha X$  such that $K_i\subseteq V$ and $K_j\subseteq W$ and $V\mathop{\cap}W= \phi$. Since $\{K_i:i\in I\}$ is locally finite in $\alpha X$, we can find open sets $V_1$ and $W_1$ such that  $K_i\subseteq V_1$ and $K_j\subseteq W_1$ and $V_1$ and $W_1$ does not intersect any of the $K_s$'s other than $K_i$ and $K_j$, respectively. Then $V\mathop{\cap}V_1$ and $W\mathop{\cap}W_1$ are open sets in $\alpha X$ such that $(V\mathop{\cap}V_1)\mathop{\cap}(W\mathop{\cap}W_1)=(V\mathop{\cap}W)\mathop{\cap}(V_1\mathop{\cap}W_1)=\phi$. Let $V_2=V\mathop{\cap}V_1$ and  $W_2=W\mathop{\cap}W_1$. Let  $V^{\star}=h(V_2)$ and  $W^{\star}=h(W_2)$. Since $h^{-1}(V^{\star})=V_2$ and $h^{-1}(W^{\star})=W_2$, $V^{\star}$ and $W^{\star}$ are disjoint open sets in $\gamma X$ such that $x\in V^{\star}$ and $y\in W^{\star}$. \\
{\bf Case (ii):}

Let $x\in Y$ and $y\in \gamma X\setminus Y$. Then $y=p_j$ for some $j\in I$. Since $\alpha X$ is normal, we can find open sets $V$ and $W$ in $\alpha X$ such that $x\in V$, $K_j\subseteq W$ and $V\mathop{\cap}W= \phi$. Since $\{K_i\}_{i\in I}$ is locally finite, there is an open set $U$ of $x$  which intersects only finite number of $K_k$'s. Suppose $U$ intersects $K_{i_1}, K_{i_2}\cdots K_{i_n}$. Then
$U_1=U\setminus\mathop{\cup}\limits_{k=i_1}^{i_n}K_k$ does not intersect none of the $K_k$'s. Similarly, find an open set $U_2$ containing $K_j$, but not containing other $K_k$'s. Let $V_1=h(U_1\mathop{\cap}V)$ and $W_1=h(U_2\mathop{\cap}W)$. Since $h^{-1}(V_1)=U_1\mathop{\cap}V$ and $h^{-1}(W_1)=U_2\mathop{\cap}W$, $V_1$ and $W_1$ are open sets in $\gamma X$ containing $x$ and $p_j$, respectively such that their intersection is empty. 
\newpage
\noindent{\bf Case (iii):}

Let $x\in Y$ and $y\in Y$. Since $\{K_i\}_{i\in I}$ is locally finite, there exist disjoint open sets $U$ and $V$ in $\alpha X$ containing $x$ and $y$, respectively such that they intersect only finite number of $K_i$'s. Suppose $U$ intersects $K_{i_1}, K_{i_2}\cdots K_{i_n}$ and $V$ intersects $K_{j_1}, K_{j_2}\cdots K_{j_m}$. Since $\alpha X$ is Hausdorff, there exist disjoint open sets $U_1$ and $V_1$ such that $x\in U_1$ and $y\in V_1$. Let  $U_2=(U\setminus\mathop{\cup}\limits_{k=i_1}^{i_n}K_k)\mathop{\cap}U_1$ and $V_2=(V\setminus\mathop{\cup}\limits_{k=j_1}^{j_m}K_k)\mathop{\cap}V_1$. Since $h(U_2)=U_2$ and $h(V_2)=V_2$, $U_2$ and $V_2$ are disjoint open sets in $\gamma X$ containing $x$ and $y$, respectively. Hence $\gamma X$ is a Hausdorff compactification of $X$. \end{proof}
\begin{rem}
This theorem 2.3 generalizes the lemma 2 in \cite{Mag}. If $K_i$ are selected in 
$\alpha X= X\mathop{\cup}(\alpha X\setminus X)$, then theorem 2.3 is true except the fact that $\gamma X$ is just a compact Hausdorff space; but not a compactification of $X$.
\end{rem}
{\section{Hausdorff partitions}}

Hausdorff partitons lead to Hausdorff compactifications. A characterization for Hausdorff partitions is obtained in this section.

Let $X$ be a Tychonoff space with its Stone-$\check{C}$ech compactification $\beta X$. Let $\pi$ be a partition of $\beta X$ such that 
\begin{itemize}
\item [(i)] Every member of $\pi$ is a compact subset of $\beta X$. 
\item [(ii)] $\{x\}\in \pi$, for every $x\in X$.
\end{itemize}
Then we have the following theorem.
\begin{thm}
Let $X$ be a Tychonoff space and $\pi$ be a partition of its Stone-$\check{C}$ech compactification $\beta X$. Then $\beta X/\pi$ is a Hausdorff compactification of $X$ under the quotient topology if and only if for every $A\in \pi$ and for every open subset $U$ of $\beta X$ such that $A\subseteq U$, there is an open subset $V$ of $\beta X$ such that $(i)\ A\subseteq V\subseteq U$ $(ii)\ V$ is a union of members of $\pi$.
\end{thm}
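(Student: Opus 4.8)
The plan is to work throughout with the quotient map $q:\beta X\to\beta X/\pi$ and to exploit the standard description of the quotient topology: a set $O\subseteq\beta X/\pi$ is open precisely when $q^{-1}(O)$ is open in $\beta X$, and such a preimage is automatically a union of members of $\pi$. Conversely, if $V\subseteq\beta X$ is open and is a union of members of $\pi$ (equivalently $q^{-1}(q(V))=V$), then $q(V)$ is open in $\beta X/\pi$. Thus the condition in the statement says exactly that every member $A$ of $\pi$ admits, inside $\beta X$, a neighbourhood base of open sets that descend to open sets of the quotient. I would record this equivalence first, together with the remark that, since $\pi$ is a partition, the fibre $q^{-1}(q(A))$ equals $A$ for each $A\in\pi$.

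For the direction \textbf{Hausdorff $\Rightarrow$ condition}, I would suppose $\beta X/\pi$ is Hausdorff and fix $A\in\pi$ and an open $U\supseteq A$. Put $C=\beta X\setminus U$; this is closed, hence compact, so $q(C)$ is compact and therefore closed in the Hausdorff space $\beta X/\pi$. Writing $a=q(A)$, the identity $q^{-1}(a)=A\subseteq U$ forces $a\notin q(C)$. Then $V:=q^{-1}\bigl((\beta X/\pi)\setminus q(C)\bigr)$ is an open saturated set, it contains $A$ (because $a\notin q(C)$), and $V\subseteq\beta X\setminus C=U$ since $q^{-1}(q(C))\supseteq C$. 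This $V$ is the required set. The only non-formal input here is that a continuous image of a compact set is compact and that compact sets are closed in Hausdorff spaces.

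For the converse \textbf{condition $\Rightarrow$ Hausdorff}, I would take two distinct points of $\beta X/\pi$, say $a=q(A)$ and $b=q(B)$ with $A,B\in\pi$ distinct and hence disjoint. As $A,B$ are disjoint compact subsets of the normal space $\beta X$, I choose disjoint open sets $U_A\supseteq A$ and $U_B\supseteq B$. Applying the hypothesis to $(A,U_A)$ and to $(B,U_B)$ produces saturated open sets $V$ and $W$ with $A\subseteq V\subseteq U_A$ and $B\subseteq W\subseteq U_B$; in particular $V\cap W=\emptyset$. Because $V$ and $W$ are unions of members of $\pi$, the sets $q(V)$ and $q(W)$ are open, contain $a$ and $b$ respectively, and are disjoint: any common point $c$ would satisfy $q^{-1}(c)\subseteq V\cap W=\emptyset$, impossible since members of $\pi$ are non-empty. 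Hence $\beta X/\pi$ is Hausdorff, and being already known to be a compactification of $X$, it is a Hausdorff compactification.

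The step I expect to require the most care is the passage from disjoint \emph{open} sets $V,W$ in $\beta X$ to disjoint \emph{open} sets in the quotient: it is essential that $V$ and $W$ be saturated, which is exactly what the hypothesis supplies and what the plain normality of $\beta X$ does not. In the forward direction the analogous delicate point is verifying $a\notin q(C)$, which hinges on $\pi$ being a genuine partition, so that the fibre over $a$ is precisely $A$.
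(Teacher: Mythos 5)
Your proposal is correct and follows essentially the same route as the paper's own proof: the forward direction takes $V=q^{-1}\bigl((\beta X/\pi)\setminus q(\beta X\setminus U)\bigr)$ exactly as the paper does, and the converse uses normality of $\beta X$ plus the saturation hypothesis to push disjoint open sets down to the quotient. The only cosmetic difference is that you justify applying normality by compactness of the partition members while the paper cites closedness of the fibres of the continuous quotient map; both are valid.
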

\begin{proof}
Let $f: \beta X \rightarrow \beta X/ \pi$ be the quotient map and $\beta X/ \pi$ be endowed with the quotient topology.

Suppose  $\beta X/ \pi$  is Hausdorff. Let $A\in \pi$ and $U$ be an open subset of $\beta X$ such that $A\subseteq U$. Then $\beta X\setminus U$ is closed and hence is a compact subset of $\beta X$. Since $f$ is continuous, $f(\beta X\setminus U)$ is compact. Since $\beta X/ \pi$  is Hausdorff, $f(\beta X\setminus U)$ is closed in $\beta X/\pi$. Moreover $f(A)$ is a singleton subset of  $\beta X/\pi$ and it is contained in the open set  $(\beta X/\pi)\setminus f(\beta X\setminus U$). Choose $V=f^{-1}((\beta X/\pi)\setminus (f(\beta X\setminus U)))$. Then $V$ is an open subset of $\beta X$ such that $(i)$ and $(ii)$ are true.

Conversely, assume that for every $A\in \pi$ and for every open subset $U$ of $\beta X$ such that $A\subseteq U$, there is an open subset $V$ of $\beta X$ such that $(i)$ and $(ii)$ are true. Let us fix two distinct points $u_1$ and $u_2$ in $\beta X/\pi$. Let $A_1=f^{-1}(u_1)$ and $A_2=f^{-1}(u_2)$. Since f is continuous, $A_1$ and $A_2$ are closed subsets of $\beta X$. Since $\beta X$ is normal, there are disjoint open subsets $U_1$ and $U_2$ in $\beta X$ such that $A_1\subseteq U_1$ and $A_2\subseteq U_2$. By assumption, there are disjoint open subsets $V_1$ and $V_2$ of $\beta X$ such that $A_1\subseteq V_1\subseteq U_1$ and $A_2\subseteq V_2 \subseteq U_2$ and $V_1$ and $V_2$ are unions of members of $\pi$. Now $f(V_1)$ and $f(V_2)$ are disjoint open subsets of $\beta X/\pi$ such that $u_1\in f(V_1)$ and $u_2\in f(V_2)$. This proves that 
$\beta X/\pi$ is Hausdorff. This completes the proof of the theorem.\end{proof}

          In the previous theorem $\beta X$ may be replaced by any other compactification $\alpha X$ of $X$.
{\section{Embedding into Lattices}}

A point in a Tychonoff space $X$ is locally compact in $X$ if it has a compact neighbourhood in $X$. Let $Y$ be the collection of all locally compact points of a Tychonoff space $X$. Suppose $Y$ is dense in $X$. 
( For example, let $X$ be the closed unit disc without some points on the unit circle and $Y$ be an open unit disc in the Euclidean plane). Then $Y$ is dense in $\beta X$ and $Y$ is locally compact. So $Y$ is open in $\beta X$ (see:\cite[Theorem 4.3]{PGR}). The collection $K(X)$ of all Hausdorff compactificaions of $X$ is a complete upper semi-lattice. Since $Y$ is locally compact, $K(Y)$ is a complete lattice. Now $K(X)$ is considered as a subset of $K(Y)$, because every Hausdorff compactification of $X$ is a Hausdorff compactification of $Y$. This identification is an order preserving map. The construction explained in section 1  reveals that this order preserving map also preserves join. Note that the join of two Hausdorff compactifications given by two partitions $\pi_1$, $\pi_2$ of $\beta X$ is given by the partition $\{ A\cap B: A\in \pi_1, B\in \pi_2\}\setminus \{\phi\}$. So we have the following theorem.\\
\begin{thm}
If $Y$ is the set of all locally compact points of a Tychonoff space $X$ and if $Y$ is dense in $X$, then the complete upper semi-lattice $K(X)$ can be embedded into the lattice $K(Y)$ by an order preserving map which also preserves join.
\end{thm}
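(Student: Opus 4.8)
The plan is to define the natural identification map $\Phi:K(X)\to K(Y)$ and verify that it is an order embedding that preserves finite joins. For $\alpha X\in K(X)$, since $Y\subseteq X$ and $X$ is dense in $\alpha X$, the set $Y$ is dense in the compact Hausdorff space $\alpha X$; hence $\alpha X$, together with the restricted inclusion $Y\hookrightarrow \alpha X$, is a Hausdorff compactification of $Y$. Define $\Phi(\alpha X)$ to be this compactification. This is exactly the identification described just before the statement, and its well-definedness is the first (routine) step.

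Next I would show that $\Phi$ is an order embedding, i.e. $\alpha_1 X\ge\alpha_2 X$ in $K(X)$ if and only if $\Phi(\alpha_1 X)\ge\Phi(\alpha_2 X)$ in $K(Y)$. The forward implication is immediate: a continuous map $h:\alpha_1 X\to\alpha_2 X$ with $h(x)=x$ for all $x\in X$ in particular satisfies $h(y)=y$ for all $y\in Y$, so it witnesses $\Phi(\alpha_1 X)\ge\Phi(\alpha_2 X)$. The reverse implication is where the density hypothesis is essential. Suppose $h:\alpha_1 X\to\alpha_2 X$ is continuous with $h(y)=y$ for all $y\in Y$. Fix $x\in X$ and choose a net $(y_\lambda)$ in $Y$ with $y_\lambda\to x$ in $X$; such a net exists because $Y$ is dense in $X$. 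Then $y_\lambda\to x$ in $\alpha_1 X$ and, since the embedding $X\hookrightarrow\alpha_2 X$ is continuous, also $y_\lambda\to x$ in $\alpha_2 X$. Continuity of $h$ gives $h(y_\lambda)\to h(x)$, while $h(y_\lambda)=y_\lambda\to x$; as $\alpha_2 X$ is Hausdorff, limits are unique and $h(x)=x$. Thus $h$ fixes all of $X$ and $\alpha_1 X\ge\alpha_2 X$ in $K(X)$. Being an order embedding, $\Phi$ is in particular injective.

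Finally I would verify that $\Phi$ preserves join. Here the join $\alpha_1 X\vee\alpha_2 X$ is taken in the semi-lattice $K(X)$, while $\Phi(\alpha_1 X)\vee\Phi(\alpha_2 X)$ is taken in $K(Y)$; these two joins live a priori in different structures (and are computed in $\beta X$ and $\beta Y$ respectively), so the content is that they coincide under $\Phi$. I would compute both using the standard diagonal description: $\alpha_1 X\vee\alpha_2 X$ is the closure of $\{(x,x):x\in X\}$ in the product $\alpha_1 X\times\alpha_2 X$, with $X$ embedded diagonally, and likewise $\Phi(\alpha_1 X)\vee\Phi(\alpha_2 X)$ is the closure of $\{(y,y):y\in Y\}$ in the same product, with $Y$ embedded diagonally. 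Because $Y$ is dense in $X$, continuity of the two embeddings forces $\overline{\{(y,y):y\in Y\}}=\overline{\{(x,x):x\in X\}}$, so the two joins are one and the same compact subspace of $\alpha_1 X\times\alpha_2 X$, carrying compatible embeddings of $X$ and $Y$. Hence $\Phi(\alpha_1 X\vee\alpha_2 X)=\Phi(\alpha_1 X)\vee\Phi(\alpha_2 X)$. Equivalently, in the language of Magill's construction from Section~2, both joins are realised by the single partition $\{A\cap B:A\in\pi_1,\ B\in\pi_2\}\setminus\{\emptyset\}$ of $\beta X$, where $\pi_1,\pi_2$ are the partitions of $\beta X$ attached to $\alpha_1 X,\alpha_2 X$; this is legitimate because $\beta X$ dominates both $\alpha_i X$ as compactifications of $Y$ as well, so the common refinement computes the join on both sides.

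The step I expect to be the main obstacle is precisely this last one: making rigorous that a join computed in $K(Y)$ (inside $\beta Y$) agrees with the join computed in $K(X)$ (inside $\beta X$). The diagonal-closure description sidesteps the mismatch of ambient Stone--\v{C}ech compactifications, and the whole argument then rests on the single fact that $Y$ is dense in $X$. The remaining verifications---well-definedness and the forward direction of order preservation---are routine.
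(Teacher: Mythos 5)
Your proof is correct. The paper itself offers only the paragraph preceding the statement as justification: it asserts that the identification $K(X)\subseteq K(Y)$ is order preserving and that the join is computed by the common refinement $\{A\cap B: A\in\pi_1,\ B\in\pi_2\}\setminus\{\emptyset\}$ of the two fibre partitions of $\beta X$, without addressing injectivity, the reverse direction of order preservation, or the fact that joins in $K(Y)$ are a priori computed inside $\beta Y$ rather than $\beta X$. You supply exactly these missing pieces: the net argument showing that a continuous map fixing $Y$ pointwise must fix all of $X$ (which makes $\Phi$ an order embedding and hence injective), and the diagonal-closure description of the join inside $\alpha_1 X\times\alpha_2 X$, which is ambient-free and reduces the identification of the two joins to the single fact that the closure of the diagonal copy of $Y$ equals the closure of the diagonal copy of $X$. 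Your closing observation --- that the common-refinement partition of $\beta X$ also computes the join in $K(Y)$ because $\beta X$, viewed as a compactification of $Y$, dominates both $\Phi(\alpha_i X)$ --- is precisely the justification the paper's one-line assertion about partitions needs. So the route agrees with the paper's in outline, but your diagonal argument is the more careful way to close the gap the paper leaves open, and nothing in your write-up is incorrect.
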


\end{document}